\newtheorem{theorem}{Theorem}
\newtheorem{proposition}[theorem]{Proposition}
\newtheorem{lemma}[theorem]{Lemma}
\begin{document}

\title{The minor minimal intrinsically chiral graphs}
\author[H. Choi]{Howon Choi}
\address{Department of Mathematics, Korea University, Seoul 02841, Korea}
\email{howon@korea.ac.kr}
\author[H. Kim]{Hyoungjun Kim}
\address{Institute of Mathematical Sciences, Ewha Womans University, Seoul 03760, Korea}
\email{kimhjun@korea.ac.kr}
\author[S. No]{Sungjong No}
\address{Department of Mathematics, Kyonggi University, Suwon 16227, Korea}
\email{sungjongno@kgu.ac.kr}

\thanks{2010 Mathematics Subject Classification: 05C10, 57M15}
\thanks{Key words and phrases: spatial graph, intrinsically chiral, minor minimal, symmetry}

\begin{abstract}
Molecular chirality is actively researched in a variety of areas of biology, including biochemistry, physiology, pharmacology, etc., and today many chiral compounds are widely known to exhibit biological properties.
The molecular structure is represented by a graph structure.
Therefore, the study of the mirror symmetry of a graph is important in the natural sciences.
A graph $G$ is said to be intrinsically chiral if no embedding of G is ambient isotopic to its mirror image.

In this paper, we find two minor minimal intrinsically chiral graphs $\Gamma_7$ and $\Gamma_8$.
Furthermore, we classify all intrinsically chiral graphs with at most eleven edges.
\end{abstract}

\maketitle

\section{Introduction}

In this paper, we will take an embedded graph to mean a graph embedded in $S^3$.
Conway and Gordon~\cite{CG} showed that every embedding of the complete graph $K_6$ contains a non-splittable link and every embedding of $K_7$ contains a non-trivial knot.
These properties are called {\it intrinsic\/} properties of the graph.
We say that $K_6$ is {\it intrinsically linked\/} and $K_7$ is {\it intrinsically knotted\/}.
A graph $H$ is a {\it minor\/} of another graph $G$ if it can be obtained by contracting edges in a subgraph of $G$.
If an intrinsically linked graph $G$ has no proper minor that is intrinsically linked, we say $G$ is {\it minor minimal intrinsically linked\/}.
A {\it minor minimal intrinsically knotted} graph is also defined similarly.
Robertson and Seymour's~\cite{RS} Graph Minor Theorem implies that there are only finitely many minor minimal intrinsically knotted graphs.

The study of spatial graph theory is closely related to not only the intrinsic properties of graphs, but also the symmetry of non-rigid molecules.
In chemistry, it is important to predict chemical properties in order to know whether a molecule is different from its mirror image.
A molecule is said to be {\it chemically chiral\/} if it cannot convert itself into its mirror image, otherwise it is said to be {\it chemically achiral}.
In spatial graph theory, molecular structures are interpreted as graphs embedded in $S^3$.
An embedding of a graph $G$ in $S^3$ is {\it topologically achiral\/} if it is ambient isotopic to its mirror image, otherwise it is {\it topologically chiral\/}.
A graph $G$ is {\it intrinsically chiral\/} if every embedding of $G$ in $S^3$ is topologically chiral, otherwise it is {\it achirally embeddable\/}.

\begin{figure}[h!]
\center
\includegraphics[scale=1]{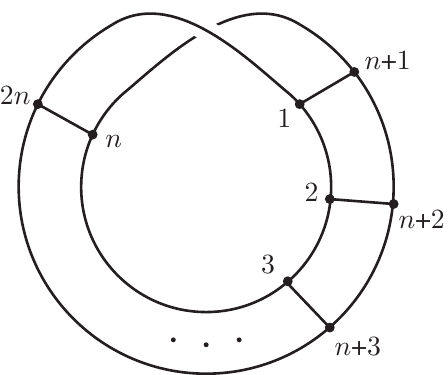}
\caption{A standard embedding of $M_n$}
\label{fig:mob}
\end{figure}

A {\it M{\"o}bius ladder\/}, denoted by $M_n$, is the graph consisting of a simple closed curve $K$ with $2n$ vertices, together with $n$ additional edges $\alpha_1 , \dots , \alpha_n$ such that if the vertices on $K$ are consecutively labeled $1,2,3,\dots,2n$ then the vertices of each edge $\alpha_i$ are $i$ and $i+n$.
We say $K$ is the {\it loop\/} of $M_n$ and $\alpha_1 , \dots , \alpha_n$ are the {\it rungs\/} of $M_n$.
We also say that $M_n$ is {\it standardly embedded\/} in $S^3$, if $M_n$ is embedded as in Figure~\ref{fig:mob} or its mirror image.
Simon~\cite{S} showed that for every standard embedding of $M_n$, there is no orientation reversing diffeomorphism $h$ of $S^3$ with $h(M_n)=M_n$ and $h(K)=K$ when $n \ge 4$.
The condition $h(K)=K$ is required for the chemical motivation, because a loop $K$ represents a molecular chain and it is different from the rungs which represent molecular bonds.
Flapan~\cite{Fl1} showed that every embedding of $M_n$, for $n \ge 3$ odd, there is no orientation reversing diffeomorphism $h$ of $S^3$ with $h(M_n)=M_n$ and $h(K)=K$.
By using this result, Flapan and Weaver~\cite{FW} showed that the complete graphs $K_{4n+3}$ with $n \geq 1$ are intrinsically chiral, and all other complete graphs $K_n$ are achirally embeddable.
Flapan and Fletcher~\cite{FF} classified which complete multipartite graphs admit topologically achiral embeddings.

\begin{figure}[h!]
\center
\includegraphics[scale=1]{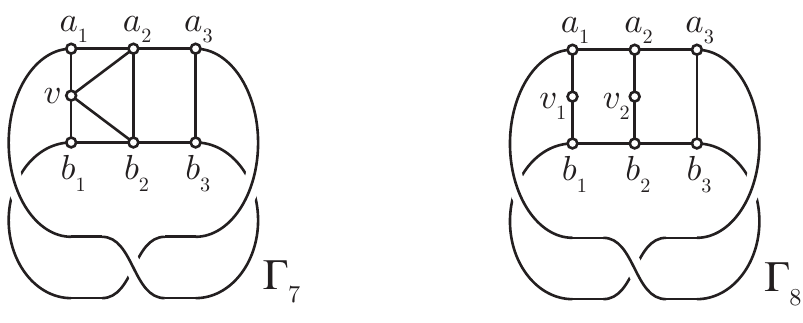}
\caption{Two minor minimal intrinsically chiral graphs $\Gamma_7$ and $\Gamma_8$}
\label{fig:mmic}
\end{figure}

In spatial graph theory, classifying all minor minimal intrinsically linked or knotted graphs is an essential problem, and is the focus of active research~\cite{BM, Fo1, Fo2, GMN, KMO, LKLO, RST}.
Similar with minor minimal intrinsically knotted or linked, we can define minor minimal intrinsically chiral. 
If a graph $G$ is intrinsically chiral and has no proper minor that is intrinsically chiral, we say $G$ is {\it minor minimal intrinsically chiral\/}.

A graph property $\mathcal{P}$ is {\it minor-closed} if every minor of a graph in $\mathcal{P}$ is also in $\mathcal{P}$.
It is important to classify the complete minor minimal family of graphs which does not have the minor-closed property, since this family is useful to determine if other graphs have the property or not.
By Wagner's Theorem~\cite{W}, planar is a minor-closed property and the complete minor minimal family of non-planar graphs consists of $K_5$ and $K_{3,3}$.
This implies that a graph $G$ has $K_5$ or $K_{3,3}$ as minor if and only if $G$ is non-planar.
Note that linklessly embeddable and knotlessly embeddable properties are minor-closed.
By Flapan and Weaver~\cite{FW}, achirally embeddable is not minor-closed, since $K_8$ is achirally embeddable but $K_7$ is not.

Although achirally embeddable is not a minor-closed property it is still
useful to find minor minimal intrinsically chiral graphs as they give a warrant that certain graphs are achirally embeddable: If a graph has no minor
that is minor minimal intrinsically chiral, then that graph is achirally embeddable. On the other hand, every intirinsically chiral graph has a minor
that is minor minimal intrinsically chiral. 
Our overall goal is to classify the minor minimal intrinsically chiral graphs.
Let $\Gamma_7$ be a graph which consists of seven vertices and twelve edges, and $\Gamma_8$ be a graph which consists of eight vertices and eleven edges as drawn in Figure~\ref{fig:mmic}.
Our main theorem is as follows.

\begin{theorem}\label{thm:mmic}
The graphs $\Gamma_7$ and $\Gamma_8$ are minor minimal intrinsically chiral.
\end{theorem}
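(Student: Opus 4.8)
The plan is to establish two things: (a) that $\Gamma_7$ and $\Gamma_8$ are intrinsically chiral, and (b) that every proper minor of each of them is achirally embeddable. At the outset I would record the bookkeeping facts used in (b): every proper minor of $\Gamma_7$ or $\Gamma_8$ has at most eleven edges; a minor of $\Gamma_7$ has at most seven vertices, so the eight-vertex graph $\Gamma_8$ is \emph{not} a minor of $\Gamma_7$; and every planar graph is achirally embeddable because a plane diagram coincides with its mirror image.

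For (a) I would argue by contradiction. Fix $G\in\{\Gamma_7,\Gamma_8\}$ and suppose some embedding $\gamma$ of $G$ is topologically achiral, so there is an orientation-reversing homeomorphism $h$ of $S^3$ with $h(\gamma)=\gamma$. Then $h$ permutes the branch vertices and arcs of $\gamma$ and hence induces an automorphism $\phi$ of $G$ (after first suppressing degree-two vertices, a step I expect to be needed for $\Gamma_8$). The key structural input is that $G$ contains a subgraph which is a subdivision of $M_3=K_{3,3}$, arranged so that $\phi$ must carry this copy to itself — either because it is the unique such subgraph, or because the remaining edges of $G$, which $\phi$ also permutes, pin it down. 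I would then read $\mathrm{Aut}(\Gamma_7)$ and $\mathrm{Aut}(\Gamma_8)$ off Figure~\ref{fig:mmic} and show that every automorphism preserving this $M_3$ also fixes some Hamiltonian six-cycle $K$ of the underlying $K_{3,3}$ setwise — equivalently, that $\mathrm{Aut}(G)$, transported into $\mathrm{Aut}(K_{3,3})$, lands in the union of the stabilizers of the six Hamiltonian cycles. Granting this, $h$ restricts to an orientation-reversing homeomorphism of $S^3$ carrying the embedded $M_3$ to itself and its loop $K$ to itself, which is impossible by Flapan's theorem~\cite{Fl1} since $3$ is odd. Any automorphism escaping this union of stabilizers would be handled separately, via the finite-order reduction used by Flapan and Weaver~\cite{FW} together with the fact that a nontrivial finite-order orientation-reversing homeomorphism of $S^3$ has fixed-point set $S^0$ or $S^2$: this forces the subgraph of $G$ fixed by $\phi$ (or, for a suitable power of $h$, by $\phi^2$) to be planar, or a subgraph of a circle, which a direct check on $G$ excludes.

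For (b) I would enumerate, up to the symmetries of $G$, the maximal proper minors $G\setminus e$, $G/e$ and $G\setminus v$ together with their further minors. I expect each of these to be either planar — hence achirally embeddable — or else a copy of $K_{3,3}$ carrying strictly less decoration than $G$ does (for instance $K_{3,3}$ with a single marked edge, or with one extra edge), and for every non-planar minor on the list I would exhibit a concrete embedding invariant under a reflection of $S^3$, i.e.\ a diagram literally symmetric across a line with the crossing data respected. Together with the observation that $\Gamma_8$ has too many vertices to be a minor of $\Gamma_7$, this shows no proper minor of $\Gamma_7$ or $\Gamma_8$ is intrinsically chiral, so each is minor minimal intrinsically chiral.

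The main obstacle is in part (a): correctly determining $\mathrm{Aut}(\Gamma_7)$ and $\mathrm{Aut}(\Gamma_8)$ and the distinguished copy of $M_3$ in each, and then checking that the \emph{whole} automorphism group — not merely a subgroup — sits inside the union of Hamiltonian-cycle stabilizers of $K_{3,3}$, so that Flapan's M{\"o}bius-ladder theorem applies to every candidate symmetry; treating any automorphism that escapes this, through the finite-order reduction and the $S^0/S^2$ fixed-set dichotomy, is the most delicate point. By comparison the minor enumeration in part (b) is finite and largely routine, its only subtleties being the completeness of the list of non-planar proper minors and the explicit mirror-symmetric embeddings exhibited for them.
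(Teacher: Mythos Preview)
Your plan is correct and, for part (a), essentially identical to the paper's: a vertex-by-vertex degree analysis shows that \emph{every} automorphism of $\Gamma_7$ and of $\Gamma_8$ preserves the distinguished six-cycle $K=a_1a_2a_3b_1b_2b_3a_1$ setwise, so Flapan's theorem applies directly and your fallback via the finite-order reduction and the $S^0/S^2$ fixed-set dichotomy is never needed. (Nor do you need to suppress the degree-two vertices of $\Gamma_8$; the paper works with them in place.)

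The real difference is in the organization of part (b). Rather than enumerating the proper minors of $\Gamma_7$ and $\Gamma_8$ separately, the paper first proves a classification: $\Gamma_8$ is the \emph{unique} intrinsically chiral simple connected graph with at most eleven edges (Proposition~\ref{prop:a8}). This absorbs almost all of the case analysis once and for all. Minor minimality of $\Gamma_8$ then follows because $\Gamma_8$ is triangle-free, so every contraction yields a simple graph with at most ten edges; minor minimality of $\Gamma_7$ follows because its simple proper minors have at most eleven edges and at most seven vertices (hence are not $\Gamma_8$), while the handful of non-simple minors arising from a single contraction are checked directly. Your direct-enumeration plan is valid, but since achiral embeddability is not minor-closed you really must check \emph{all} proper minors, not just the maximal ones, and you would end up redoing most of the classification work twice; the paper's route is cleaner and yields the classification statement as a bonus.
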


To show the main theorem, we first find the minimum number of vertices and edges for a simple graph to be intrinsically chiral.

\begin{theorem}\label{thm:main}
If a simple graph $G$ is intrinsically chiral, then $G$ has at least seven vertices and at least eleven edges.
\end{theorem}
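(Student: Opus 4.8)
The plan is to prove the contrapositive: that every simple graph $G$ with at most six vertices or at most ten edges is achirally embeddable. The fundamental tool will be the trivial remark that a planar graph is achirally embeddable, since any embedding of it into a sphere $S^2\subset S^3$ is preserved by the orientation-reversing reflection of $S^3$ fixing $S^2$. I will also use three ``stability'' facts, each proved by carrying out the relevant ambient isotopy inside a small ball: if $G$ is achirally embeddable, then so is $G$ with a pendant edge attached, so is every subdivision of $G$, and so is the disjoint union of $G$ with any other achirally embeddable graph. Finally I will invoke, as stated in the introduction, that $K_5$ and $K_6$ are achirally embeddable~\cite{FW} and that every complete multipartite graph on at most six vertices is achirally embeddable~\cite{FF}; in particular $K_{3,3}$ is achirally embeddable. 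Since a non-planar graph contains a subdivision of $K_5$ or of $K_{3,3}$ (Kuratowski), counting edges inside such a subdivision will organize both bounds.

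\emph{The edge bound.} Let $G$ be non-planar with at most ten edges. A subdivision of $K_5$ has at least ten edges, so if $G$ contains one then, after deleting isolated vertices, $G=K_5$. Otherwise $G$ contains a subdivision of $K_{3,3}$, which has at least nine edges; with a budget of ten edges such a subdivision is either $K_{3,3}$ with a single subdivided edge, forcing $G$ to be exactly that graph up to isolated vertices, or $K_{3,3}$ itself, in which case the single remaining edge makes $G$ equal to $K_{3,3}$, to $K_{3,3}$ together with one extra edge inside a part, or to $K_{3,3}$ with a pendant edge --- again up to isolated vertices, and allowing an extra disjoint component. By the stability facts and the achiral embeddability of $K_{3,3}$, all of these follow once I produce an achiral embedding of $K_{3,3}+e$, the graph obtained by adding one edge inside a part of $K_{3,3}$. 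I plan to get this either by taking the achiral embedding of $K_{3,3}$, checking that it carries an orientation-reversing symmetry fixing the two endpoints of $e$, and adjoining $e$ along an arc invariant under that symmetry, or by drawing such an embedding from scratch.

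\emph{The vertex bound.} Any simple graph on at most six vertices with at most ten edges is already covered by the edge bound, so it remains to treat graphs on exactly six vertices with between eleven and fifteen edges. Writing such a graph as $K_6-F$, the ``defect'' $F$ is a graph on those six vertices with at most four edges, so there are only finitely many cases. If $K_6-F$ is planar, or if it is complete multipartite --- equivalently, if $F$ is a disjoint union of cliques --- we are done by the remarks above and by~\cite{FW,FF}. There remain finitely many non-planar, non-complete-multipartite graphs $K_6-F$, namely those with $F$ containing an induced path on three vertices (for instance $F=P_3,\ P_4,\ C_4,\ K_{1,3},\ K_{1,4}$, a triangle with a pendant edge, $P_3\sqcup P_3$, and so on). For each of these I plan to start from a suitable achiral embedding of $K_6$, identify an orientation-reversing symmetry of it, and delete an edge set invariant under that symmetry; a short analysis of the orbits of such symmetries on the fifteen edges of $K_6$ should show that every remaining $F$ arises this way, giving an achiral embedding of $K_6-F$ (supplemented, where convenient, by ad hoc symmetric embeddings).

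The step I expect to be the main obstacle is the handful of explicit constructions --- the achiral embedding of $K_{3,3}+e$, and the achiral embeddings of the sporadic dense six-vertex graphs --- together with the verification that these embeddings are genuinely achiral: one must check that a diagram which merely looks symmetric is actually preserved by an orientation-reversing diffeomorphism of $S^3$, and this is exactly the subtlety that makes intrinsic chirality nontrivial (it is what fails for the standard embeddings of $M_n$ in the rigidity results of Simon and Flapan). A secondary but necessary point is to confirm that the two enumerations above are complete, so that no graph with at most six vertices or at most ten edges is overlooked.
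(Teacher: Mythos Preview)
Your overall enumeration is sound and close to the paper's own approach (reduce to finitely many non-planar graphs on at most six vertices, plus a short list of seven-vertex ten-edge graphs, and exhibit an achiral embedding of each). But one of your three ``stability'' facts is simply false, and it is precisely the one you lean on in the edge-bound argument. You assert that if $G$ is achirally embeddable then so is every subdivision of $G$. The graph $\Gamma_8$ of this very paper is a subdivision of $K_{3,3}$ (subdivide two non-adjacent edges once each), $K_{3,3}$ is achirally embeddable, yet $\Gamma_8$ is intrinsically chiral. So the lemma fails already for a two-step subdivision; and since $\Gamma_8$ is also a single subdivision of the achirally embeddable graph $E$ (namely $K_{3,3}$ with one edge subdivided), it fails for single subdivisions too. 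The heuristic ``carry out the relevant ambient isotopy inside a small ball'' cannot work here: achirality requires a global orientation-reversing homeomorphism of $S^3$, and inserting a new vertex on an edge that is not setwise invariant under that homeomorphism destroys the symmetry. The pendant-edge claim has the same defect in principle (the attachment vertex need not be fixed by any orientation-reversing symmetry of any achiral embedding), so it also needs justification rather than a local hand-wave.

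The good news is that your plan is easily repaired. In the edge bound you only need the specific graphs $K_{3,3}$ with one edge subdivided, $K_{3,3}$ with a pendant edge, and $K_{3,3}+e$; for each of these one can exhibit an explicit mirror-symmetric embedding (exploiting that $K_{3,3}$ is vertex- and edge-transitive, so one may place the relevant edge or vertex on the mirror plane). This is exactly what the paper does: it identifies the two seven-vertex ten-edge graphs $E$ and $E'$ and draws symmetric embeddings of them, rather than invoking any general stability principle. Your vertex-bound strategy (complete multipartite cases via \cite{FF}, remaining $K_6-F$ by deleting symmetry-invariant edge sets from an achiral embedding of $K_6$) is a reasonable alternative to the paper's brute-force listing of the fourteen non-planar six-vertex graphs, and should go through once you actually produce the embeddings.
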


In Section~\ref{sec:mincon}, we show that the two graphs $\Gamma_7$ and $\Gamma_8$ are intrinsically chiral, and they satisfy the minimum conditions of vertices and edges for intrinsic chirality, respectively.
Since $\Gamma_8$ has two degree 2 vertices, we also find such conditions for a simple graph which consists of vertices with degree 3 or more.

\begin{theorem}\label{thm:main2}
If a simple graph $G$ is intrinsically chiral which consists of vertices with degree 3 or more, then $G$ has at least seven vertices and at least twelve edges.
\end{theorem}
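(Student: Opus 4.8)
The plan is to combine Theorem~\ref{thm:main} with a short counting argument to reduce to a finite, explicit list of graphs, and then show every graph on that list is achirally embeddable. By Theorem~\ref{thm:main} an intrinsically chiral simple graph of minimum degree at least $3$ already has at least seven vertices, so it is enough to show that no such graph has at most eleven edges. Suppose $G$ is intrinsically chiral, has minimum degree at least $3$, and satisfies $|E(G)|\le 11$. The handshake lemma gives $22\ge 2|E(G)|\ge 3|V(G)|\ge 21$, so $|V(G)|=7$; then $21\le 2|E(G)|\le 22$ and, since $2|E(G)|$ is even, $|E(G)|=11$ and the degree sequence of $G$ is forced to be $(4,3,3,3,3,3,3)$. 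Thus it suffices to show that every simple graph on seven vertices with exactly one vertex of degree $4$ and six of degree $3$ admits a topologically achiral embedding, contradicting the assumption that $G$ is intrinsically chiral.

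To enumerate these graphs I would delete the unique degree-$4$ vertex $v$. The graph $G-v$ has six vertices, seven edges and degree sequence $(3,3,2,2,2,2)$, and $G$ is recovered by joining $v$ to the four degree-$2$ vertices of $G-v$. Running through the finitely many graphs with degree sequence $(3,3,2,2,2,2)$ (sorted, say, by whether the two degree-$3$ vertices are adjacent and by how the degree-$2$ vertices are distributed into paths between them) produces the complete list of candidate graphs $G$, a short finite list.

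For each candidate I would produce an orientation-reversing homeomorphism $h$ of $S^3$ with $h(G)=G$ for a suitable embedding of $G$. If $G$ is planar this is immediate: embed $G$ in a $2$-sphere $S^2\subset S^3$ and let $h$ be the reflection across $S^2$, which is orientation-reversing and fixes $G$ pointwise. If $G$ is non-planar — and such graphs do occur on the list, for example a subdivision of $K_{3,3}$ with one extra edge attached — I would build an explicit mirror-symmetric embedding: choose an order-$2$ automorphism $\sigma$ of $G$, place the (small, planar) subgraph fixed by $\sigma$ on a reflection sphere $S^2$, and place the two $\sigma$-orbits of the remaining vertices and edges as mirror images in the two balls bounded by $S^2$; the reflection across $S^2$ then preserves the embedding, so it is achiral. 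In the few cases where no convenient involution is available one instead constructs the embedding by hand, with every edge an unknotted arc arranged symmetrically with respect to a rotary reflection of $S^3$.

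The main obstacle is the last step for the non-planar candidates: one must check that the candidate list is complete and that, for each non-planar graph on it, the chosen automorphism genuinely yields a crossing-free embedding with the required orientation-reversing symmetry — in particular that the $\sigma$-fixed subgraph always embeds in the reflection sphere and that the two mirror halves can be realised without forcing a crossing. Deciding planarity of the individual candidates is routine (by Kuratowski's theorem no subdivision of $K_5$ can occur, since only one vertex has degree at least $4$, so only subdivisions of $K_{3,3}$ need to be considered), but it has to be carried out carefully for the bookkeeping.
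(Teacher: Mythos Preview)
Your plan is correct and is essentially the paper's own argument: reduce via Theorem~\ref{thm:main} and the handshake lemma to the forced degree sequence $(4,3,3,3,3,3,3)$ on seven vertices and eleven edges, enumerate, and exhibit an achiral embedding of each candidate. Your enumeration via $G-v$ (six vertices with degree sequence $(3,3,2,2,2,2)$, split by whether the two degree-$3$ vertices are adjacent) is exactly the paper's case split on whether the two non-neighbours $v_5,v_6$ of the degree-$4$ vertex are adjacent, and the paper's execution shows your ``main obstacle'' never materialises: there are only four graphs in all, three are planar, and the single non-planar one has an evident order-$2$ automorphism yielding a mirror-symmetric embedding, so no rotary reflections or ad~hoc constructions are needed.
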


In the next section we introduce some terminology and outline the strategy of our proof.

\section{Terminology}

A simple graph is a graph containing no loops and multiple edges.
Let $G=(V,E)$ denote a simple graph, where $V$ and $E$ are the sets of vertices and edges, respectively.
The degree of a vertex $v$ in $G$ is the number of edges incident to $v$, denoted by $\deg(v)$.
The minimum degree of $G$, denoted by $\delta(G)$, is the minimum degree among all vertices of $G$.
Let $|G|$ and $||G||$ denote the number of vertices and edges of $G$, respectively.
If $G$ is a disconnected intrinsically chiral graph, then it must have a component which is intrinsically chiral.
This means that it has an intrinsically chiral graph as minor, and so $G$ is not MMIC.
Therefore, this paper deals only with simple connected graphs because the purpose of this paper is to find the minimum conditions of vertices and edges for a simple graph to be intrinsically chiral.

Suppose that an embedding of the graph $G$ is symmetrical on the left and right with respect to a plane $\mathcal{M}$.
Let $h$ be a reflection through $\mathcal{M}$.
Then $h$ is an orientation reversing homeomorphism which sends a such embedding of $G$ to itself.
Hence this embedding is topologically achiral.
This means that $G$ is not intrinsically chiral.
We call $\mathcal{M}$ a mirror plane.
Note that every planar graph has a topologically achiral embedding, since it can lie on the mirror plane.

\begin{figure}[h!]
\center
\includegraphics[scale=1.25]{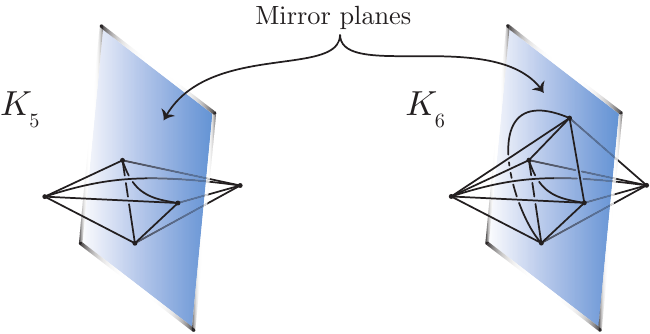}
\caption{Mirror symmetry embeddings of $K_5$ and $K_6$}
\label{fig:mirror}
\end{figure}

Flapan and Weaver~\cite{FW} showed that each of the complete graphs $K_5$ and $K_6$ has a topologically achiral embedding.
Both of them have embeddings which are symmetrical about a mirror plane $\mathcal{M}$ as drawn in Figure~\ref{fig:mirror}.
By using this property, we can show that $K_5$ and $K_6$ are not intrinsically chiral.
This property will frequently be used in the rest of the paper.

It is also important to show that the given graph is intrinsically chiral.
We use following proposition, which is shown by Flapan~\cite{Fl1}, to determine the intrinsic chirality of the given graph.

\begin{proposition}\label{prop:fla}
Let $M_n$ be a M{\"o}bius ladder which is embedded in $S^3$ with loop $K$, where $n \geq 3$ is an odd number.
Then there is no diffeomorphism $h:S^3 \rightarrow S^3$ which is orientation reversing with $h(M_n)=M_n$ and $h(K)=K$.
\end{proposition}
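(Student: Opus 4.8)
The plan is to argue by contradiction: suppose $h\colon S^3\to S^3$ is an orientation reversing diffeomorphism with $h(M_n)=M_n$ and $h(K)=K$, and aim to contradict the hypothesis that $n$ is odd. \emph{Step 1 (combinatorics).} Since $h$ preserves the graph $M_n$ and the loop $K$ setwise, it induces an automorphism of the abstract graph $M_n$ carrying $K$ to $K$. The vertices of $K$ form a $2n$-gon on which the rungs impose the antipodal matching $i\leftrightarrow i+n$, and every symmetry of a $2n$-gon respects this matching; hence the group of such automorphisms is dihedral of order $4n$. (For the exceptional value $n=3$, where $M_3=K_{3,3}$, one checks directly that the stabilizer of a Hamiltonian $6$-cycle is still dihedral of order $12$.) In particular $h$ permutes the rungs $\alpha_1,\dots,\alpha_n$ by a dihedral symmetry of $\mathbb{Z}/n$; because $n$ is odd, such a symmetry is either a rotation with no invariant rung or a reflection fixing exactly one rung. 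It also permutes the $n$ square faces of the ladder and the zig-zag closed walk obtained by alternating rungs with edges of $K$, which is a single Hamiltonian cycle precisely when $n$ is odd.

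\emph{Step 2 (reduction to finite order).} Next I would replace $h$ by a symmetry of finite order. The exterior $X=S^3\smallsetminus \mathring{N}(M_n)$ is a Haken $3$-manifold, and combining the structure theory of finite group actions on such manifolds (via Thurston's geometrization and the Orbifold Theorem) with the solution of the Smith conjecture---or else invoking Flapan's earlier results on the rigidity of graph symmetries in $S^3$---one may arrange that $h$ has finite order while still preserving $(M_n,K)$ and inducing a compatible graph automorphism. I expect this to be the main obstacle: it is the only place where genuine $3$-manifold input is required, whereas everything else is essentially bookkeeping.

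\emph{Step 3 (the parity obstruction).} Finally I would run a short case analysis on a suitable prime-order power $g$ of the finite-order symmetry. By Smith theory together with the Smith conjecture, $\mathrm{Fix}(g)$ is empty, a pair of points, an unknotted circle, or a $2$-sphere---and in the last case that sphere is a mirror plane for $M_n$. Tracking how $\mathrm{Fix}(g)$ meets $K$ and the rungs, and using that the induced rung permutation is a dihedral symmetry of $\mathbb{Z}/n$ with the parity features recorded in Step 1 (no invariant rung for a rotation of odd order, a unique invariant rung for a reflection), I would show that no case is compatible with $h$ being orientation reversing: an odd number of rungs cannot be threaded through the fixed locus consistently with the induced involution on $K$, equivalently the single zig-zag Hamiltonian cycle forced by the oddness of $n$ cannot be carried to itself or to its mirror in the way the symmetry demands. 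I would isolate this parity computation as a lemma and dispatch the fixed-point cases uniformly.
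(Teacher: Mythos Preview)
The paper does not give its own proof of this proposition: it is quoted verbatim as a result of Flapan~[Fl1] and used throughout as a black box. So there is nothing in the paper to compare your argument against; the authors simply cite the theorem and move on.

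As for your sketch itself: Steps~1 and~2 are essentially the architecture of Flapan's original argument---reduce to a finite-order symmetry of the pair $(S^3,M_n)$ using the rigidity of the Haken exterior, then analyze the combinatorics of the induced dihedral action on $K$. Your Step~2 correctly flags the reduction to finite order as the only place where serious 3-manifold machinery enters. Step~3, however, is not yet a proof but a hope: you would need to say precisely which power $g$ you take (a power of an orientation-reversing map need not be orientation-reversing, and orientation-reversing diffeomorphisms of finite order necessarily have even order), and then actually carry out the case analysis of how $\mathrm{Fix}(g)$ can meet $K$ and the rungs. The sentence ``an odd number of rungs cannot be threaded through the fixed locus consistently'' is where the content lives, and as written it is an assertion rather than an argument. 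If you want a self-contained proof you should consult Flapan's paper directly, where the parity obstruction is made explicit.
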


In Section~\ref{sec:mincon}, we consider a graph automorphism in order to apply this propositions to show that $\Gamma_7$ and $\Gamma_8$ are intrinsically chiral.
A graph {\it automorphism} of $G$ is a permutation $\phi$ on the set of vertices $V$ that satisfies the property that $\{ u_i, u_j \} \in E$ if and only if $\{ \phi (u_i), \phi (u_j) \} \in E$.

\section{Proof of Theorem~\ref{thm:main}}

Let $G$ be a simple graph.
It sufficient to show that if $|G| \le 6$ or $||G|| \le 10$ then $G$ is achirally embeddable.
First, we consider the case that $|G| \le 6$.
By using the complement graph of $G$, we classify all non-planar graphs.
A graph $H$ is said to be the {\it complement} of a graph $G$ if $H$ has the same set of vertices with $G$ and has the complement set of edges of $G$.
By following lemma, every graph in this case, is achirally embeddable.

\begin{lemma}\label{lem:6v}
Every graph with at most 6 vertices is achirally embeddable.
\end{lemma}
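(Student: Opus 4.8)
The plan is to show that \emph{every} simple graph $G$ with $|G| \le 6$ admits an embedding symmetric about a mirror plane $\mathcal{M}$, hence is achirally embeddable. The key observation is that it suffices to treat the maximal cases: if $G'$ is a subgraph of $G$ and $G'$ has a mirror-symmetric embedding, then deleting the extra edges of $G$ keeps... no—rather, we want $G$ itself to sit inside a larger symmetric embedding. So the right reduction is: it suffices to exhibit, for each graph on $6$ vertices that is \emph{edge-maximal} among achirally embeddable graphs, a mirror-symmetric embedding; every graph with at most $6$ vertices is a subgraph of some such graph, and a subgraph of a mirror-symmetric embedding is again mirror-symmetric (just restrict the embedding). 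In particular, since $K_6$ itself has a mirror-symmetric embedding by the discussion around Figure~\ref{fig:mirror} (Flapan--Weaver), and every simple graph on $\le 6$ vertices is a subgraph of $K_6$, we are essentially done in one line.

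First I would make this precise: let $G$ be a simple graph with $|G| = k \le 6$. Add $6-k$ isolated vertices if necessary so that $|G| = 6$; this does not change whether $G$ is achirally embeddable (isolated points can be placed on $\mathcal{M}$). Then $G$ is a spanning subgraph of $K_6$. Take the mirror-symmetric embedding $f$ of $K_6$ from Figure~\ref{fig:mirror}, which is invariant under the reflection $h$ through $\mathcal{M}$. Restricting $f$ to the vertices and edges of $G$ gives an embedding $f|_G$ of $G$ in $S^3$. Since $h \circ f = f$ on all of $K_6$, in particular $h$ maps the image $f|_G(G)$ to itself, so $f|_G$ is topologically achiral. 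Hence $G$ is not intrinsically chiral, i.e.\ $G$ is achirally embeddable.

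The only subtlety—and the step I expect to be the main point to get right—is that the restriction of an embedding to a subgraph is genuinely an embedding of that subgraph with the \emph{same} ambient symmetry, and that the reflection $h$ with $h \circ f = f$ really does witness topological achirality of $f|_G$: we need $h(f|_G(G)) = f|_G(G)$, which is immediate since $h$ fixes the larger set $f(K_6)$ pointwise-as-a-set and $f|_G(G) \subseteq f(K_6)$ is the image of a subset of edges, itself an $h$-invariant subcomplex because $h$ permutes the vertices of the symmetric embedding and the edge set of $K_6$ is all of $\binom{[6]}{2}$, hence trivially $h$-invariant; the subset $E(G)$ however need not be $h$-invariant as an abstract edge set, so one must instead observe that $h$ restricted to $f(K_6)$ sends $f|_G(G)$ to the embedded copy of the graph $h_*(G)$, and conclude achirality of \emph{that} labeled embedding, which suffices since $h_* G \cong G$ as abstract graphs. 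Cleanest is simply: $K_6 \supseteq G$, $K_6$ is achirally embeddable, and achiral embeddability is inherited by subgraphs because the reflection of $S^3$ fixing $f(K_6)$ setwise carries the sub-embedding of $G$ to a sub-embedding of $G$ up to relabeling by an automorphism of the ambient picture. I would state this inheritance as a one-sentence remark and then the lemma is immediate.
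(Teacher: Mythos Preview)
Your proposed reduction --- restrict a mirror-symmetric embedding of $K_6$ to the subgraph $G$ --- does not work, and the principle you end with (``achiral embeddability is inherited by subgraphs'') is false. The paper itself supplies the counterexample in the introduction: $K_8$ is achirally embeddable but its subgraph $K_7$ is intrinsically chiral. The failure in your argument is exactly the point you flagged and then waved away: the reflection $h$ with $h(F(K_6))=F(K_6)$ must permute the vertices nontrivially (if $h$ fixed every vertex it would fix every embedded edge setwise, forcing all of $K_6$ into the plane $\mathcal{M}$, impossible since $K_6$ is nonplanar). Hence for a generic subgraph $G\subset K_6$ the reflection carries $F|_G(G)$ to $F|_{G'}(G')$ with $G'=h_*(G)\neq G$; the abstract isomorphism $G\cong G'$ gives you nothing about whether the two \emph{embeddings} $F|_G$ and $F|_{G'}$ are ambient isotopic, which is precisely what achirality of $F|_G$ requires.

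The paper's proof avoids this by not trying to use a single master embedding. It first disposes of planar graphs and $K_5$, then for $|G|=6$ enumerates the $14$ nonplanar subgraphs of $K_6$ (organized via their complements) and exhibits an individual mirror-symmetric embedding for each one. That case-by-case step is exactly the work your shortcut tries to skip; something like it appears to be genuinely necessary.
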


\begin{proof}
If G has at most 5 vertices, then $G$ is either a planar graph or the complete graph $K_5$.
Since $K_5$ and any planar graph are achirally embeddable, $G$ is not intrinsically chiral.

\begin{figure}[h!]
\center
\includegraphics[scale=1]{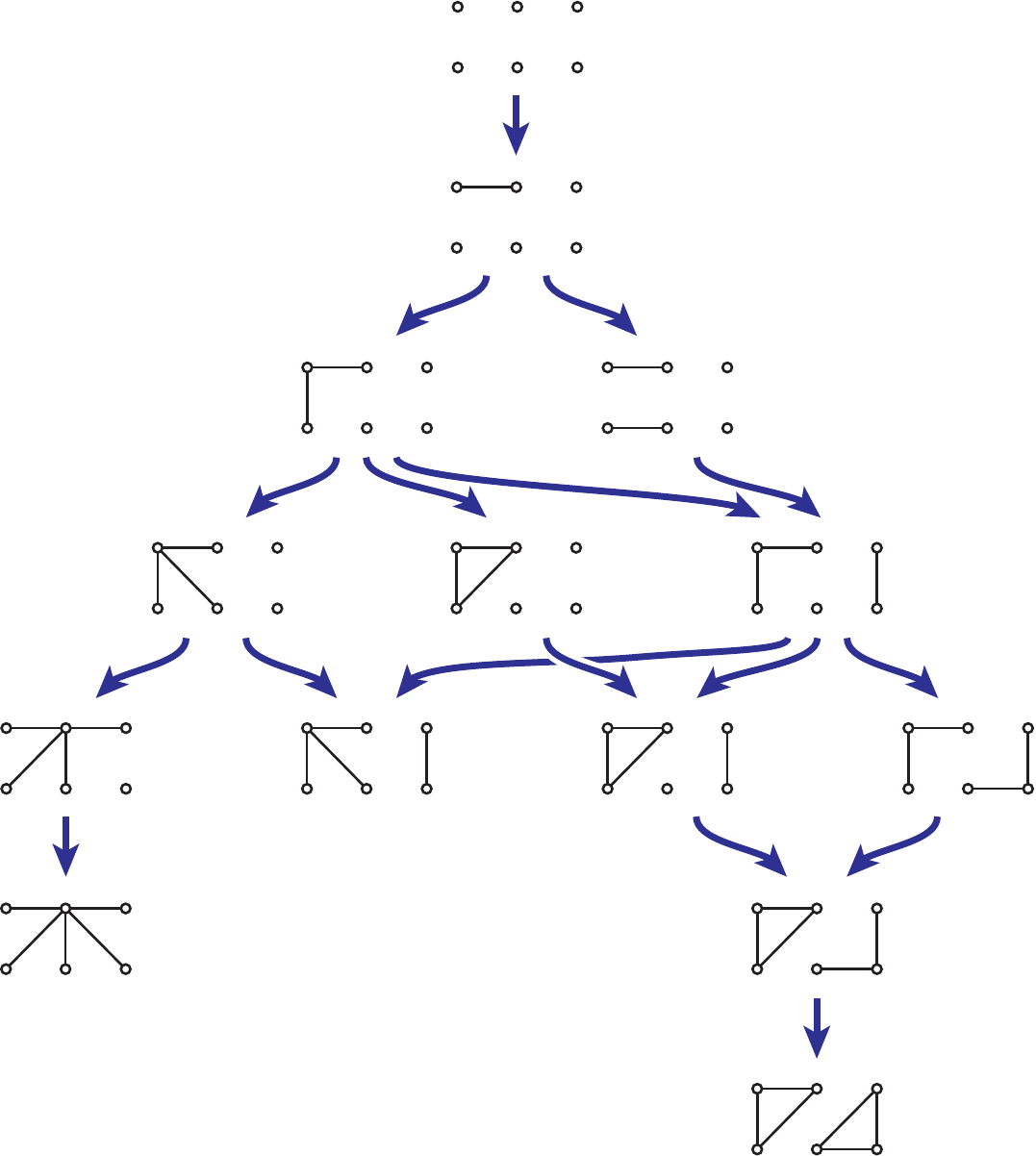}
\caption{The complement graphs of non-planar subgraphs of $K_6$}
\label{fig:13c}
\end{figure}

It remains to consider that $G$ has 6 vertices.
Now we classify every non-planar graph with 6 vertices by deleting edges from $K_6$.
There are 14 non-planar graphs which are subgraphs of $K_6$.
Figure~\ref{fig:13c} represents the complement graph of non-planar subgraphs of $K_6$.
In this figure, we find all such non-planar graphs by adding a removed edge from $K_6$ repeatedly.
Note that the graph obtained from a planar graph is also planar.
So if we obtain a planar graph in this process then we do not consider it.
Since each of them has a topologically achiral embedding as drawn in Figure~\ref{fig:14g}, $G$ is not intrinsically chiral.
\end{proof}

\begin{figure}[h!]
\center
\includegraphics[scale=0.8]{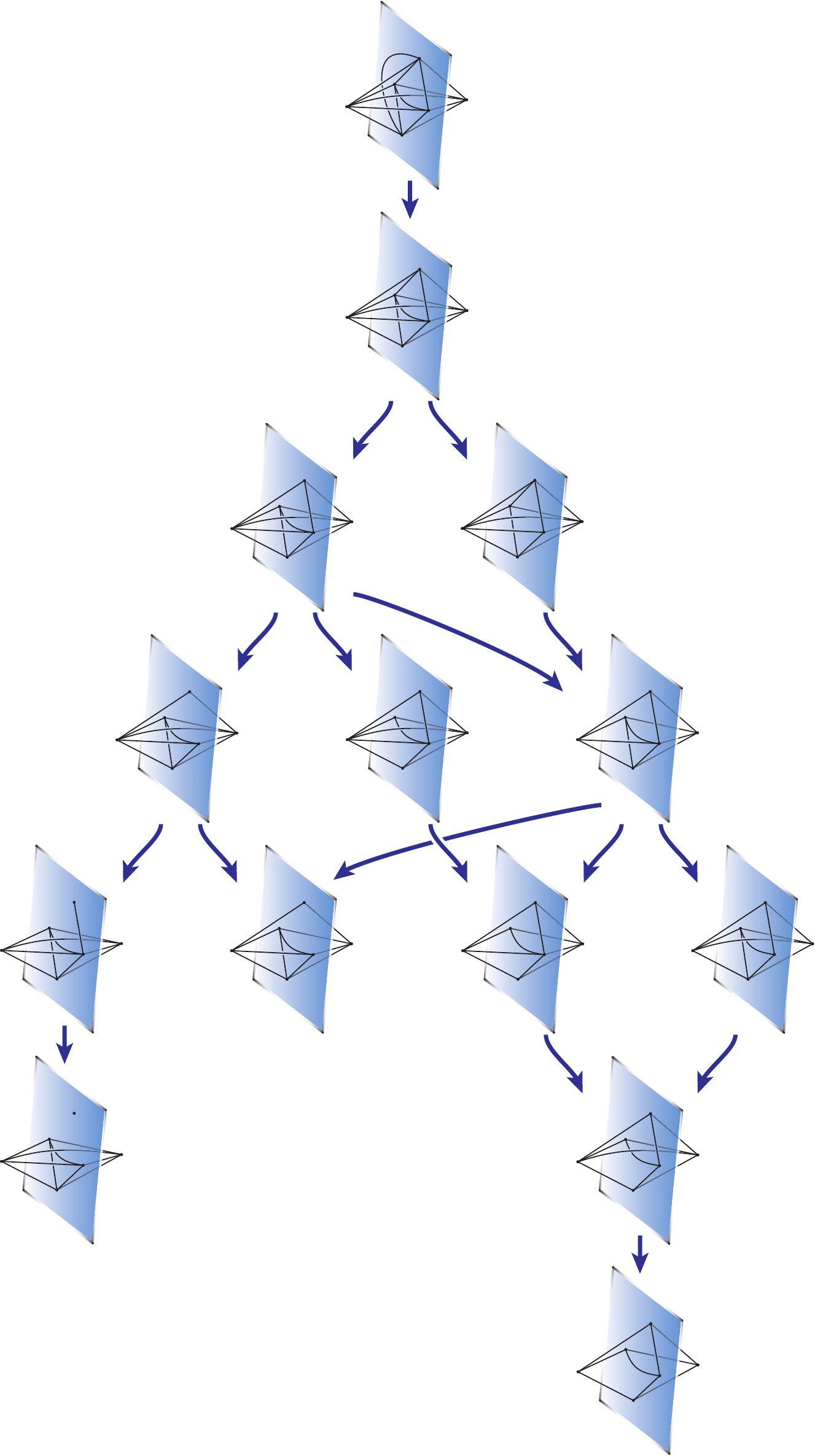}
\caption{Topologically achiral embeddings of non-planar subgraphs of $K_6$}
\label{fig:14g}
\end{figure}

We can determine $G$ is planar or not by the relation between $||G||$ and $|G|$. 
For example, if a connected graph $G$ satisfies $||G|| - |G| = -1$, then $G$ is tree, and so it is planar.
The following lemma helps us to prove Theorem~\ref{thm:main}.

\begin{lemma}\label{lem:pla}
Let $G$ be a connected graph.
If $||G||-|G| \le 2$, then $G$ is planar.
\end{lemma}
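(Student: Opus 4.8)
The plan is to invoke Wagner's Theorem, cited above as~\cite{W}: a graph is non-planar if and only if it contains $K_5$ or $K_{3,3}$ as a minor. So it suffices to show that a connected graph $G$ with $||G|| - |G| \le 2$ has neither of these as a minor. The quantity I would track is the \emph{cycle rank} (first Betti number) $\beta(G) = ||G|| - |G| + c(G)$, where $c(G)$ is the number of connected components of $G$; for a connected graph this is simply $||G|| - |G| + 1 \le 3$ under our hypothesis.

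First I would record the two relevant values: $\beta(K_5) = 10 - 5 + 1 = 6$ and $\beta(K_{3,3}) = 9 - 6 + 1 = 4$, both strictly larger than $3$. Next I would check that $\beta$ cannot increase when one passes to a minor. Deleting an edge either disconnects a component (increasing $c$ by one and leaving $\beta$ unchanged) or not (decreasing $||G||$ by one with $c$ fixed, so $\beta$ drops by one); deleting an isolated vertex decreases $|G|$ and $c(G)$ each by one, leaving $\beta$ fixed; and contracting a non-loop edge decreases $|G|$ by one while decreasing $||G||$ by at least one (by exactly one, plus one for each pair of parallel edges identified in the process), with $c(G)$ unchanged, so $\beta$ does not increase. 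Iterating over the operations defining a minor gives $\beta(H) \le \beta(G) \le 3$ for every minor $H$ of $G$. Hence $G$ has no $K_5$ or $K_{3,3}$ minor, and Wagner's Theorem yields that $G$ is planar.

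An alternative that bypasses the bookkeeping in the contraction step is to use Kuratowski's Theorem in its subdivision form: subdividing an edge leaves $||\cdot|| - |\cdot|$ unchanged, so any subdivision $H$ of $K_5$ (resp.\ $K_{3,3}$) is connected with $\beta(H) = 6$ (resp.\ $4$); and any subgraph $H \subseteq G$ satisfies $\beta(H) \le \beta(G)$ since the $\mathbb{F}_2$-cycle space of $H$ is literally a subspace of that of $G$ (a cycle in $H$ is a cycle in $G$). If $G$ contained such a subdivision as a subgraph we would get $3 \ge \beta(G) \ge \beta(H) \ge 4$, a contradiction. I expect the only genuinely delicate point is making minor-monotonicity of $\beta$ fully airtight in the contraction case (parallel edges), and the subdivision formulation avoids it entirely; everything else is routine Euler-characteristic counting together with the cited Wagner/Kuratowski theorem.
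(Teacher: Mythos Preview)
Your proposal is correct and follows essentially the same route as the paper: both argue by Wagner's theorem, tracking a quantity that cannot increase under minor operations and comparing against the values $3$ and $5$ for $K_{3,3}$ and $K_5$. The only cosmetic difference is that the paper works directly with $||G||-|G|$ (noting it drops under deletion and is unchanged under contraction), whereas you use the cycle rank $\beta = ||G||-|G|+c(G)$, which handles possible disconnection and parallel-edge identification a bit more explicitly; your Kuratowski alternative is a pleasant extra but not needed.
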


\begin{proof}
Suppose for the contradiction that $G$ is non-planar.
Then we can obtain $K_{3,3}$ or $K_5$ from $G$ by a sequence of edge deletion or edge contraction.
Note that $||G||-|G|$ is decreased after using an edge deletion, and is not changed after using an edge contraction.
Thus $||G||-|G| \ge 3$, because $||K_{3,3}||-|K_{3,3}| =3$ and $||K_5||-|K_5|=5$.
This contradicts the assumption.
\end{proof}

Now, we consider the case that $||G|| \le 10$.
By following lemma, every graph in this case is achirally embeddable.

\begin{lemma}\label{lem:10}
Every graph with at most 10 edges is achirally embeddable.
\end{lemma}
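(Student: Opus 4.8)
The plan is to reduce, via Lemmas~\ref{lem:6v} and~\ref{lem:pla}, to a single size of graph, to pin down by a Kuratowski argument exactly which graphs of that size can occur, and then to exhibit a mirror plane for each of them. For the reduction: recall that we may assume $G$ is connected. If $|G|\le 6$, then $G$ is achirally embeddable by Lemma~\ref{lem:6v}. If $|G|\ge 8$, then $||G||-|G|\le 10-8=2$, so $G$ is planar by Lemma~\ref{lem:pla}, and a planar graph lies on a mirror plane and is therefore achirally embeddable. If $|G|=7$ and $||G||\le 9$, then again $||G||-|G|\le 2$ and $G$ is planar. Hence the only remaining case is $|G|=7$, $||G||=10$, and in that case we may further assume $G$ is non-planar.

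Next I would identify these graphs. A non-planar graph contains a subdivision of $K_5$ or of $K_{3,3}$. A subdivision of $K_5$ with $t$ subdivision vertices has $5+t$ vertices and $10+t$ edges, so here $t=0$ and $G\supseteq K_5$; but then the ten edges of $K_5$ exhaust $E(G)$ while spanning only five of the seven vertices, leaving two isolated vertices, contradicting connectivity. So $G$ contains a subdivision $S$ of $K_{3,3}$, with $6+t$ vertices and $9+t$ edges, forcing $t\le 1$. If $t=1$, then $S$ already has seven vertices and ten edges, so $G=S$ is $K_{3,3}$ with exactly one edge subdivided once; call this graph (b). If $t=0$, then $S\cong K_{3,3}$ sits on six of the seven vertices, the seventh vertex is incident only to the tenth edge, and $G$ is $K_{3,3}$ with a pendant vertex; call this graph (a). It then remains only to embed (a) and (b) achirally.

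Finally I would build these embeddings from the symmetry of $K_{3,3}$. Write the parts as $\{a_1,a_2,a_3\}$ and $\{b_1,b_2,b_3\}$, place $b_1,b_2,b_3$ at the corners of a triangle in a plane $\mathcal{M}$, place $a_3$ in its interior joined to each $b_i$ by a segment in $\mathcal{M}$, place $a_1$ above $\mathcal{M}$ joined to $b_1,b_2,b_3$ by disjoint arcs in the upper half-space, and place $a_2$ below $\mathcal{M}$ as the reflection of $a_1$. This is a crossing-free embedding of $K_{3,3}$ carried to itself by the reflection $h$ in $\mathcal{M}$, and the four vertices $a_3,b_1,b_2,b_3$ and the three edges $a_3b_i$ lie in $\mathcal{M}$. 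Since $K_{3,3}$ is vertex- and edge-transitive, for (a) we may assume the pendant vertex is attached at $a_3$ and add it and its edge inside $\mathcal{M}$; for (b) we may assume the subdivided edge is $a_3b_1$ and place the new degree-two vertex on that segment inside $\mathcal{M}$. In either case $h$ still carries the new embedding to itself and is orientation reversing, so the graph is achirally embeddable.

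The part I expect to be most work is the bookkeeping in the middle step: verifying that the case analysis really is exhaustive, i.e. that no connected non-planar graph on seven vertices and ten edges other than (a) and (b) exists, and then checking that the pendant edge, respectively the subdivision vertex, can genuinely be inserted into $\mathcal{M}$ without creating a crossing with the portion of $K_{3,3}$ already drawn there. Both are routine, but they are where the actual verification lives; the reduction in the first paragraph and the symmetry argument in the third are essentially immediate given the earlier lemmas.
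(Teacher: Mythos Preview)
Your argument is correct and follows essentially the same three-step structure as the paper's proof: reduce via Lemmas~\ref{lem:6v} and~\ref{lem:pla} to connected non-planar graphs with $|G|=7$ and $||G||=10$, identify the two possibilities (the paper calls them $E$ and $E'$; they coincide with your (a) and (b)), and exhibit a mirror-symmetric embedding of each built from a symmetric embedding of $K_{3,3}$. The only cosmetic difference is in the identification step: the paper observes that $||G||-|G|=3=||K_{3,3}||-|K_{3,3}|$ forces $K_{3,3}$ to arise from $G$ by a single edge contraction (hence $G$ is a vertex-split of $K_{3,3}$, giving $E$ or $E'$), whereas you reach the same pair via the subdivision form of Kuratowski's theorem and a vertex/edge count---but this is the same idea in dual packaging.
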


\begin{proof}
Suppose that $G$ has at most $10$ edges and contains $K_5$ or $K_{3,3}$ as a minor.
By Lemma~\ref{lem:6v} and \ref{lem:pla}, we may assume that $G$ is a non-planar graph with 7 vertices and 10 edges.
Since $||G||-|G|=3$, $K_{3,3}$ may be obtained by a single edge contraction of $G$.
This implies that $G$ is either $E$ or $E'$ which are achirally embeddable as drawn in Figure~\ref{fig:7v10e}.
Therefore, $G$ is achirally embeddable.
\end{proof}

\begin{figure}[h!]
\center
\includegraphics[scale=1.25]{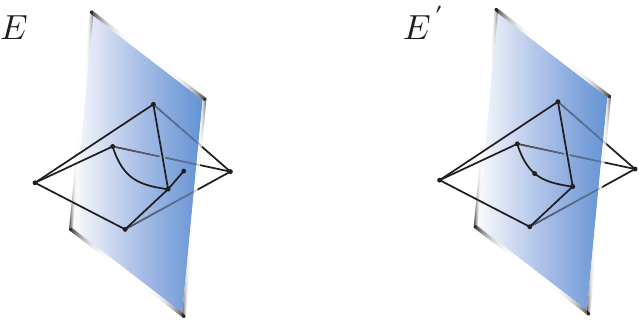}
\caption{Topologically achiral embeddings of $E$ and $E'$}
\label{fig:7v10e}
\end{figure}

By the above lemmas, for $G$ to be an intrinsically chiral graph, it must have at least 7 vertices and 11 edges.

\section{The minimum condition of $|G|$ and $||G||$}\label{sec:mincon}

This section presents two intrinsically chiral graphs $\Gamma_7$ and $\Gamma_8$ that are related to the minimum number of vertices or edges for intrinsic chirality.
This implies that they are minor minimal intrinsically chiral graphs.
To show these graphs are intrinsically chiral, we use Proposition~\ref{prop:fla} for $M_3$ as drawn in Figure~\ref{fig:m3}.
This proposition implies that no orientation reversing diffeomorphism sends the loop of $M_3$ to itself.
By using this result, we prove following propositions.

\begin{figure}[h!]
\center
\includegraphics[scale=1.1]{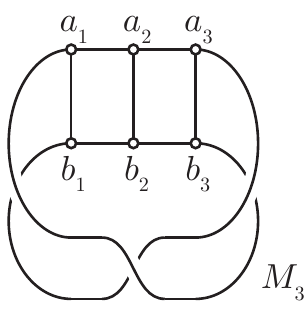}
\caption{The m{\"o}bius ladder $M_3$}
\label{fig:m3}
\end{figure}

\begin{proposition}\label{prop:a7}
The graph $\Gamma_7$ is an intrinsically chiral graph.
\end{proposition}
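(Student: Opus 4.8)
The plan is to argue by contradiction using Proposition~\ref{prop:fla} for $n=3$. Suppose $\Gamma_7$ has a topologically achiral embedding in $S^3$; then there is an orientation reversing diffeomorphism $h\colon S^3\to S^3$ with $h(\Gamma_7)=\Gamma_7$. Since every vertex of $\Gamma_7$ has degree at least three, $h$ carries vertices to vertices and edges to edges, hence induces a graph automorphism $\phi$ of $\Gamma_7$; moreover, for any subgraph $M$ of $\Gamma_7$ with $\phi(V(M))=V(M)$ and $\phi(E(M))=E(M)$ we automatically get $h(M)=M$ as subsets of $S^3$, using that $\Gamma_7$ is simple. Thus it is enough to exhibit a subgraph $M\subseteq\Gamma_7$ with $M\cong M_3$, together with the loop $K$ of some M\"obius ladder structure on $M$, such that $\phi(M)=M$ and $\phi(K)=K$: then $h$ is an orientation reversing diffeomorphism of $S^3$ with $h(M)=M$ and $h(K)=K$, which contradicts Proposition~\ref{prop:fla}. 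Hence no such $h$ exists and $\Gamma_7$ is intrinsically chiral.

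So the substance of the proof is the assertion that every automorphism of $\Gamma_7$ stabilizes some M\"obius ladder substructure together with its loop. First I would read off $\mathrm{Aut}(\Gamma_7)$ from Figure~\ref{fig:mmic}: identify the degree sequence, the vertex orbits, and in particular a piece of data that every automorphism must fix --- for example, a distinguished vertex (such as the unique vertex whose removal leaves a copy of $M_3$) or a distinguished edge (such as the unique edge lying in a prescribed number of triangles or short cycles). Such fixed data canonically determines a subgraph $M\cong M_3$ inside $\Gamma_7$, so that $\phi(M)=M$ holds for every $\phi\in\mathrm{Aut}(\Gamma_7)$. It then remains, working through $\mathrm{Aut}(\Gamma_7)$ element by element, to check that $\phi$ also fixes a loop of $M$. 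If $\mathrm{Aut}(\Gamma_7)$ is trivial this is immediate, since $\phi=\mathrm{id}$ fixes every vertex and, $\Gamma_7$ being simple, every edge setwise, so any choice of $M$ and $K$ works. If $\mathrm{Aut}(\Gamma_7)$ is nontrivial, recall that $M_3\cong K_{3,3}$ carries six distinct loops, and one simply computes the action of each nontrivial $\phi$ on this set of six $6$-cycles and points to one that is fixed, adjusting the choice of $M$ among the M\"obius ladder subgraphs of $\Gamma_7$ if necessary.

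I expect the loop condition, not the $M_3$ condition, to be the main obstacle. Proposition~\ref{prop:fla} genuinely needs $h(K)=K$, and an automorphism can perfectly well stabilize a copy of $M_3$ while permuting its loops nontrivially --- for instance a transposition of two vertices in one part of $K_{3,3}$ fixes the graph but moves every loop. Hence the argument must be arranged so that, for the \emph{particular} $\phi$ realized by $h$, some loop is actually preserved; depending on the shape of $\mathrm{Aut}(\Gamma_7)$ this may mean using a less obvious M\"obius ladder substructure of $\Gamma_7$, or else ruling out on structural grounds that an automorphism which destroys every loop could be induced by an orientation reversing diffeomorphism. Organizing this finite case analysis is where the work lies; once a $\phi$-invariant pair $(M,K)$ with $M\cong M_3$ is in hand, the conclusion follows at once from Proposition~\ref{prop:fla}.
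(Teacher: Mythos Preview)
Your plan is the paper's argument: assume an orientation reversing $h$ with $h(\Gamma_7)=\Gamma_7$, analyse the induced automorphism, find an invariant $M_3$ together with an invariant loop, and invoke Proposition~\ref{prop:fla}. The paper does the automorphism computation concretely: a short degree/adjacency check shows that every automorphism fixes $v$ and preserves each pair $\{a_i,b_i\}$, so $\mathrm{Aut}(\Gamma_7)$ has order~$2$, and the $6$-cycle $K=a_1a_2a_3b_1b_2b_3$ is then visibly invariant under both elements. Your worry about loop-permuting automorphisms therefore does not materialise here, because the extra structure of $\Gamma_7$ rules out exactly those $K_{3,3}$-automorphisms that move every Hamiltonian cycle.

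There is, however, one concrete point at which your plan as written would stall. You look for a \emph{subgraph} $M\subseteq\Gamma_7$ with $M\cong M_3$, and even suggest ``the unique vertex whose removal leaves a copy of $M_3$''. No such subgraph exists: deleting $v$ leaves only eight edges among $a_1,\dots,b_3$ (one short of the nine needed for $K_{3,3}$), and deleting any other vertex leaves some vertex of degree at most~$2$, so $\Gamma_7$ contains no $K_{3,3}$ on six vertices. What it does contain is a \emph{subdivision} of $M_3$: remove the two edges $va_2$ and $vb_2$, and what remains is $M_3$ with the rung $a_1b_1$ subdivided at $v$. Topologically this is still an embedded $M_3$ in $S^3$, and since every automorphism of $\Gamma_7$ fixes $v$ and preserves the pair $\{va_2,vb_2\}$, the map $h$ restricts to this subdivided $M_3$ and fixes $K$; Proposition~\ref{prop:fla} then applies exactly as you intend. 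So the only correction needed is to replace ``subgraph isomorphic to $M_3$'' by ``subgraph homeomorphic to $M_3$'' (equivalently, a topological $M_3$); with that adjustment your outline coincides with the paper's proof.
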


\begin{proof}
Let $h:S^3 \rightarrow S^3$ be a homeomorphism such that $h(\Gamma_7) = \Gamma_7$.
The homeomorphism $h$ induces an automorphism on the vertices of $\Gamma_7$.
$\Gamma_7$ consists of three degree 4 vertices $v$, $a_2$ and $b_2$, and four degree 3 vertices $a_1$, $b_1$, $a_3$ and $b_3$.
More precisely, $a_1$ and $b_1$ are adjacent to two degree 4 vertices and one degree 3 vertex, and $a_3$ and $b_3$ are adjacent to one degree 4 vertex and two degree 3 vertices.
This means that $h$ does not send $\{ a_1, b_1 \}$ into $\{ a_3, b_3 \}$.
Furthermore $v$, $a_2$ and $b_2$ are adjacent to two degree 4 vertices and two degree 3 vertices.
In detail, $v$ is adjacent to two degree 3 vertices $a_1$ and $b_1$, and $a_2$ is adjacent to two degree 3 vertices $a_1$ and $a_3$.
This implies that $h$ does not send $v$ into $a_2$, since $h(a_1) \neq a_3$ and $h(b_1) \neq a_3$.
Similarly, $h$ does not send $v$ into $b_2$.
Thus we have that 
\begin{eqnarray*}
  \{a_1,~b_1\} \xlongrightarrow{h} \{a_1,~b_1\}\\
  \{a_2,~b_2\} \xlongrightarrow{h} \{a_2,~b_2\}\\
  \{a_3,~b_3\} \xlongrightarrow{h} \{a_3,~b_3\}\\
  \{ v \} \xlongrightarrow{h} \{ v \} \qquad 
\end{eqnarray*}
Let $K=a_1a_2a_3b_1b_2b_3a_1$ be a cycle of $\Gamma_7$.
Then we have
\begin{eqnarray*}
h(a_1a_2a_3b_1b_2b_3a_1) =
\begin{cases}
 a_1a_2a_3b_1b_2b_3a_1 & \text{if~}h(a_1)=a_1,\\
 b_1b_2b_3a_1a_2a_3b_1 & \text{if~}h(a_1)=b_1.
\end{cases}
\end{eqnarray*}
Thus $h(K)=K$ in $\Gamma_7$.

$M_3$ is obtained from $\Gamma_7$ by deleting two edges $e_{va_2}$ and $e_{vb_2}$ and removing a vertex $v$ by edge contraction.
Hence the homeomorphism $h$ satisfies $h(M_3)=M_3$ and $h(K)=K$.
By Proposition~\ref{prop:fla}, $h$ can not be an orientation reversing homeomorphism.
Therefore, $\Gamma_7$ is intrinsically chiral.
\end{proof}

Together with Proposition~\ref{prop:a7}, Theorem~\ref{thm:main} establishes the minimum condition of vertices and edges for any simple graph to be intrinsically chiral.
Note that $\Gamma_7$ has twelve edges.
This is the minimum condition of edges for intrinsic chirality of graphs which consists of vertices with degree 3 or more.
Theorem~\ref{thm:main2} explicitly shows that the minimum condition of edges for such graph.

\begin{figure}[h!]
\center
\includegraphics[scale=1.1]{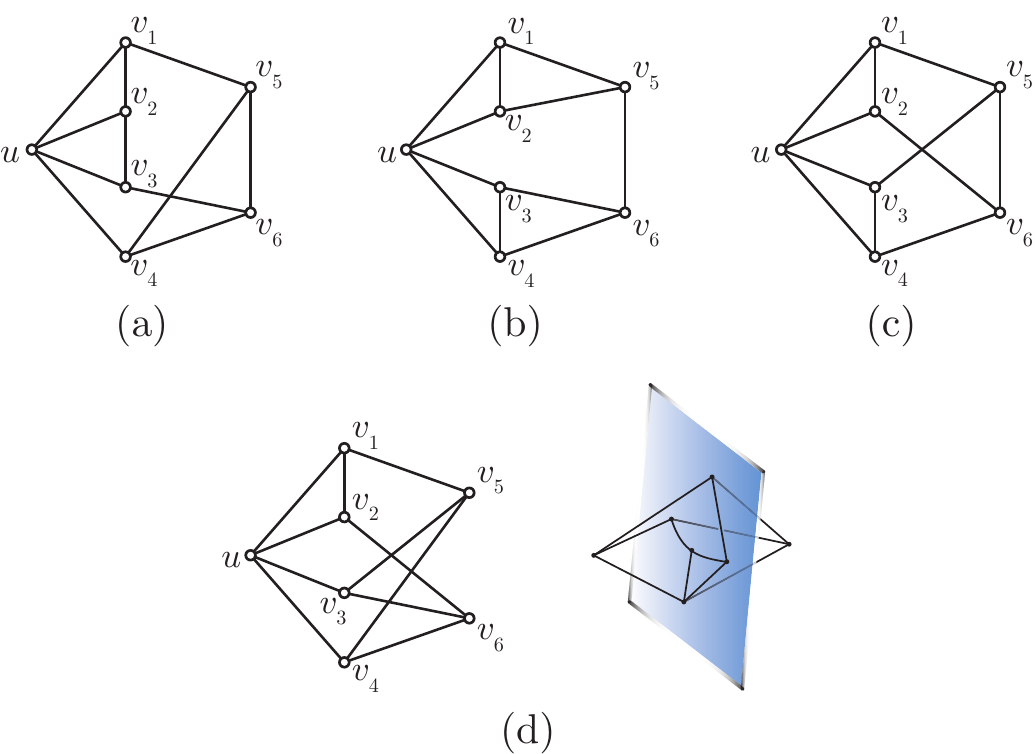}
\caption{4 cases of $G$ when $(|G|,||G||)=(7,11)$}
\label{fig:7v11e}
\end{figure}

\begin{proof}[Proof of Theorem~\ref{thm:main2}]
By Theorem~\ref{thm:main}, we only need to consider the cases that $G$ has 11 edges and 7 or more vertices.
If $G$ has 8 or more vertices then $G$ has at least 12 edges, since $\delta(G) \ge 3$.
So we may assume that $G$ has 7 vertices, and so $G$ consists of a degree 4 vertex and six degree 3 vertices.
Let $u$ be a degree 4 vertex, and $v_1, \dots, v_6$ be degree 3 vertices such that $v_1$, $v_2$, $v_3$, and $v_4$ are adjacent to $u$.
If $v_5$ is adjacent to $v_6$, then there are two edges between $v_1,\dots,v_4$.
Assume that $v_2$ is adjacent to $v_1$ and $v_3$.
Then there is the unique graph as drawn in Figure~\ref{fig:7v11e} (a), and it is planar.
Assume that $v_1$ is adjacent to $v_2$, and $v_3$ is adjacent to $v_4$.
Then there are exactly two graphs as drawn in Figure~\ref{fig:7v11e} (b) and (c), and both graphs are planar.
If $v_5$ is not adjacent to $v_6$, then there is the unique graph which is non-planar as drawn in Figure~\ref{fig:7v11e} (d), and it is achirally embeddable.
Therefore if $G$ is intrinsically chiral then it has at least twelve edges.
\end{proof}

The graph $\Gamma_7$ satisfies the minimum condition of vertices for any simple graph.
The graph $\Gamma_8$ satisfies the minimum condition of edges for any simple graph, and it is the unique intrinsically chiral graph among all simple graphs with 11 edges.

\begin{proposition}\label{prop:a8}
The graph $\Gamma_8$ is an intrinsically chiral graph.
Furthermore, any other simple connected graph with at most 11 edges is achirally embeddable.
\end{proposition}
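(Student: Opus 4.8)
The plan is to establish the two parts of the proposition separately. For the first, that $\Gamma_8$ is intrinsically chiral, I would argue exactly as in the proof of Proposition~\ref{prop:a7}. Let $h\colon S^3\to S^3$ be a homeomorphism with $h(\Gamma_8)=\Gamma_8$; it induces a graph automorphism of $\Gamma_8$. By Figure~\ref{fig:mmic}, $\Gamma_8$ has exactly two vertices of degree $2$ and six of degree $3$, and $\Gamma_8$ is a subdivision of the M\"obius ladder $M_3$, obtained from it by suppressing the two degree-$2$ vertices; let $K$ be the cycle of $\Gamma_8$ that maps onto the loop of $M_3$. Since $h$ must carry degree-$2$ vertices to degree-$2$ vertices, the local adjacency pattern of the remaining vertices (which neighbours have which degree, as in the proof of Proposition~\ref{prop:a7}) pins the induced automorphism down enough to force $h(K)=K$. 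As $M_3$ is obtained from $\Gamma_8$ by a sequence of edge contractions, $h(M_3)=M_3$ and $h(K)=K$, so Proposition~\ref{prop:fla} with $n=3$ prevents $h$ from being orientation reversing; thus every embedding of $\Gamma_8$ is topologically chiral.

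For the second assertion, let $G$ be a simple connected graph with $||G||\le 11$ that is not achirally embeddable. By Lemma~\ref{lem:10}, $||G||=11$; by Lemma~\ref{lem:6v}, $|G|\ge 7$; and if $|G|\ge 9$ then $||G||-|G|\le 2$, so $G$ is planar by Lemma~\ref{lem:pla} — a contradiction. Hence $|G|\in\{7,8\}$. If $|G|=7$ and $\delta(G)\ge 3$, the case analysis in the proof of Theorem~\ref{thm:main2} shows $G$ is planar or the single non-planar graph of Figure~\ref{fig:7v11e}(d), which is achirally embeddable; so $\delta(G)\le 2$ when $|G|=7$. If $|G|=8$ then $\delta(G)\le 2$ automatically, because $2\cdot 11<3\cdot 8$, and $||G||-|G|=3$, so a non-planar such $G$ has a $K_{3,3}$ minor (a $K_5$ minor would force $||G||-|G|\ge 5$, by the reasoning of Lemma~\ref{lem:pla}).

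It then remains to treat the non-planar graphs with $11$ edges on $7$ or $8$ vertices that contain a vertex of degree at most $2$. I would run a short enumeration of these, using two constructions that lift a symmetric embedding of a smaller graph to one of $G$: if $v$ has degree $1$, then $G-v$ has $10$ edges and is achirally embeddable by Lemma~\ref{lem:10}, and one attaches the pendant edge along a mirror plane of $G-v$ through the neighbour of $v$; if $v$ has degree $2$ with neighbours $a,b$, one passes to the smaller graph $G'$ obtained by suppressing $v$ (if $ab\notin E(G)$) or by deleting $v$ (if $ab\in E(G)$), which has at most $10$ edges and hence a symmetric embedding, and re-inserts the degree-$2$ vertex on an edge fixed by the reflection, or where that edge meets the mirror plane transversally. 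For $|G|=8$ a degree count forces the degree sequence $(2,2,3,3,3,3,3,3)$ for the graphs not disposed of this way, so they are subdivisions of one of the two cubic graphs on six vertices, namely $M_3$ or the triangular prism; subdivisions of the prism are planar, and among the finitely many subdivisions of $M_3$ with $11$ edges a direct check of symmetric embeddings shows that every one except $\Gamma_8$ is achirally embeddable. Together with the first part, this gives that $\Gamma_8$ is the only exception.

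The step I expect to be the main obstacle is the verification in the last paragraph. First, one must make the degree-$1$ and degree-$2$ reductions rigorous: one needs to know that the symmetric embedding of the smaller graph can be chosen with the relevant vertex on the mirror plane, or with the relevant edge invariant under (or meeting transversally) the reflection, and that this construction fails only for $\Gamma_8$. Second, one must carry out the enumeration and exhibit an explicit symmetric embedding of each non-planar $7$- and $8$-vertex graph with $11$ edges other than $\Gamma_8$. Proving that $\Gamma_8$ is the \emph{unique} such exception, rather than merely one of possibly several, is where essentially all of the case work lies.
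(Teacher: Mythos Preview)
Your plan is essentially the paper's: show that every self-homeomorphism of $\Gamma_8$ preserves the loop $K$ and invoke Proposition~\ref{prop:fla}; then reduce the uniqueness statement via Lemmas~\ref{lem:6v}--\ref{lem:10} and Theorem~\ref{thm:main2} to non-planar graphs with $||G||=11$, $|G|\in\{7,8\}$, $\delta(G)\le 2$, and finish by enumeration. Two remarks on where your sketch and the paper diverge. First, your claim that ``a degree count forces the degree sequence $(2,2,3,3,3,3,3,3)$'' for $|G|=8$ is true but not from counting degrees alone: sequences such as $(2,2,2,3,3,3,3,4)$ satisfy the same arithmetic. What actually forces it is that $||G||-|G|=3$ rules out a $K_5$ minor (as in Lemma~\ref{lem:pla}), so the graph obtained by suppressing all degree-$2$ vertices must be $K_{3,3}$ on the nose; this is exactly how the paper argues, phrasing it as ``$G$ is obtained from $K_{3,3}$ by two subdivisions.'' Second, rather than a general degree-$1$/degree-$2$ lifting lemma, the paper makes the enumeration explicit: for $|G|=7$ it names the unique non-planar $6$-vertex $10$-edge graph $F$ (so the degree-$1$ case is $F$ plus a pendant placed on the mirror plane, and the degree-$2$ case is a subdivision of $F$ or one exceptional graph $F'$), and for $|G|=8$ it splits on whether the two subdivided edges of $K_{3,3}$ coincide, are adjacent, or are disjoint, the last case being exactly $\Gamma_8$. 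Your flagged obstacle---choosing the symmetric embedding of the smaller graph so that the relevant vertex or edge lies on the mirror plane---is handled in the paper by exhibiting specific embeddings of $E$, $E'$, $F$, $F'$ rather than by an abstract argument.
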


\begin{proof}
First, we show that $\Gamma_8$ is an intrinsically chiral graph.
Suppose that there is a homeomorphism $h:S^3 \rightarrow S^3$ such that $h(\Gamma_8) = \Gamma_8$.
Similar with the proof of Proposition~\ref{prop:a7}, we have the following:

\underline{Case 1}. $a_1$ $\xlongrightarrow{h}$ $a_1$ or $b_1$
\begin{eqnarray*}
  \{a_1,~b_1\} \xlongrightarrow{h} \{a_1,~b_1\}\\
  \{a_2,~b_2\} \xlongrightarrow{h} \{a_2,~b_2\}\\
  \{a_3,~b_3\} \xlongrightarrow{h} \{a_3,~b_3\}\\
  \{ v_1 \} \xlongrightarrow{h} \{ v_1 \} \qquad\\ 
  \{ v_2 \} \xlongrightarrow{h} \{ v_2 \} \qquad 
\end{eqnarray*}

\underline{Case 2}. $a_1$ $\xlongrightarrow{h}$ $a_2$ or $b_2$
\begin{eqnarray*}
  \{a_1,~b_1\} \xlongrightarrow{h} \{a_2,~b_2\}\\
  \{a_2,~b_2\} \xlongrightarrow{h} \{a_1,~b_1\}\\
  \{a_3,~b_3\} \xlongrightarrow{h} \{a_3,~b_3\}\\
  \{ v_1 \} \xlongrightarrow{h} \{ v_2 \} \qquad\\ 
  \{ v_2 \} \xlongrightarrow{h} \{ v_1 \} \qquad 
\end{eqnarray*}

Let $K=a_1a_2a_3b_1b_2b_3a_1$ be a cycle of $\Gamma_8$.
Then we have
\begin{eqnarray*}
h(a_1a_2a_3b_1b_2b_3a_1) =
\begin{cases}
 a_1a_2a_3b_1b_2b_3a_1 & \text{if~}h(a_1)=a_1,\\
 b_1b_2b_3a_1a_2a_3b_1 & \text{if~}h(a_1)=b_1,\\
 a_2a_1b_3b_2b_1a_3a_2 & \text{if~}h(a_1)=a_2,\\
 b_2b_1a_3a_2a_1b_3b_2 & \text{if~}h(a_1)=b_2.
\end{cases}
\end{eqnarray*}
Thus $h(K)=K$ in $\Gamma_8$.
By Proposition~\ref{prop:fla}, $h$ can not be an orientation reversing homeomorphism.
Therefore $\Gamma_8$ is intrinsically chiral.

\begin{figure}[h!]
\center
\includegraphics[scale=1.25]{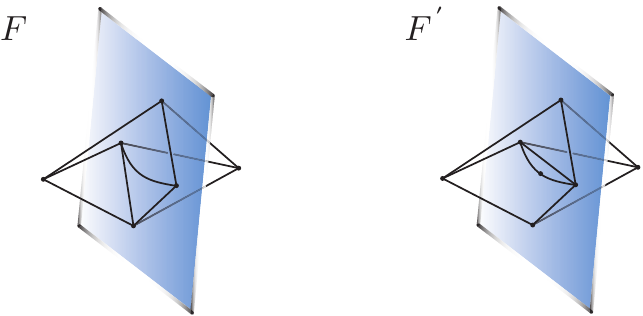}
\caption{Topologically achiral embeddings of $F$ and $F'$}
\label{fig:71112}
\end{figure}

Now suppose that $G$ is an intrinsically chiral graph with at most 11 edges, and $w$ is a vertex with minimal degree of $G$.
By lemma~\ref{lem:6v}, \ref{lem:pla} and \ref{lem:10}, $G$ has 7 or 8 vertices and 11 edges.
By Theorem~\ref{thm:main2}, if $G$ has at most 11 edges and consists of vertices with degree 3 or more, then it can not be intrinsically chiral.
So $w$ must be a degree 1 or 2 vertex.

We first consider the case that $G$ has 7 vertices.
Suppose that $w$ is a degree 1 vertex.
Then $G \setminus w$ consists of six vertices and ten edges.
Since $G$ is non-planar, $G \setminus w$ is the graph $F$ as drawn in Figure~\ref{fig:71112}.
In this case, we can put $w$ and the corresponding edges on the mirror plane of an embedding of $F$.
Hence $w$ is a degree 2 vertex.
Since $G$ is non-planar, $G$ is either the graph obtained from $F$ by a subdivision of an edge, say $e_1$, or the graph $F'$.
In former case, we can obtain an embedding of $F$ such that $e_1$ is on the mirror plane.
From this embedding, we can obtain $G$ by a subdivision of $e_1$, so it is achirally embeddable.
In latter case, $F'$ is achirally embeddable as drawn in Figure~\ref{fig:71112}, so $G$ can not be intrinsically chiral.

Now we consider the case that $G$ has 8 vertices.
Since $||G||-|G|=3$, we may obtain $K_{3,3}$ from $G$ by using edge contractions twice, by Lemma~\ref{lem:pla}.
This means that we may obtain $E$ or $E'$ as drawn in Figure~\ref{fig:7v10e}, from $G$ by contracting an edge adjacent to $w$.
If $\text{deg}(w)=1$ then $G$ is obtained from $E$ or $E'$, as drawn in Figure~\ref{fig:7v10e}, by adding a vertex $w$ and its incident edge.
We can put $w$ and the corresponding edges on the mirror plane of both embeddings of $E$ and $E'$.
If $\text{deg}(w)=2$ then $G$ is obtained from $K_{3,3}$ by twice of subdivisions of edges.
If $G$ is obtained from $K_{3,3}$ by two subdivisions of one edge, say $e_2$, of $K_{3,3}$, then we can obtain an embedding of $K_{3,3}$ such that $e_2$ is on the mirror plane.
From this embedding, we can obtain $G$ by two subdivisions of $e_2$, so it is achirally embeddable.
So $G$ is obtained from $K_{3,3}$ by two subdivisions of two edges, say $e_3$ and $e_4$.
If $e_3$ and $e_4$ has a common vertex then we can obtain an embedding of $K_{3,3}$ such that both edges $e_3$ and $e_4$ are on the mirror plane.
From this embedding, we can obtain $G$ by each subdivision of $e_3$ and $e_4$, so it is achirally embeddable.
Otherwise, $G$ is the graph $\Gamma_8$, and it is the unique intrinsically chiral graph among the graphs with at most 11 edges.
\end{proof}

\begin{proof}[Proof of Theorem~\ref{thm:mmic}]
First consider the minor graphs of $\Gamma_7$.
Since edge contractions and deletions are independent of the order of application,
we can divide these graphs into three types by how many contractions we can get from $G$.
Since the minor graphs obtained from $\Gamma_7$ by using edge contractions more than once are $K_5$ or planar, they are achirally embeddable.
Since the minor graphs obtained from $\Gamma_7$ by only using edge deletions are simple, they are also achirally embeddable by Proposition~\ref{prop:a8}.
So it remains to consider the non-simple minor graphs which are obtained from $\Gamma_7$ by using an edge contraction once.
By the symmetry of edges of $\Gamma_7$, there are four types of edges, $e_{v a_1}$, $e_{v a_2}$, $e_{a_1 a_2}$ and $e_{a_2 b_2}$ where $\Gamma_7$ becomes non-simple when using an edge contraction.
The graphs obtained from $\Gamma_7$ by using an edge contraction at $e_{v a_2}$, $e_{a_1 a_2}$ or $e_{a_2 b_2}$ are planar.
The graph obtained from $\Gamma_7$ by using an edge contraction at $e_{v a_1}$ is not planar, but it is achirally embeddable.
Moreover subgraphs of this graph are also achirally embeddable.
So $\Gamma_7$ is a minor minimal intrinsically chiral graph.

Since $\Gamma_8$ contains no triangle, the graph obtained from $\Gamma_8$ by a contraction is still simple.
The graphs obtained from $\Gamma_8$ by using edge contractions more than one are $K_{3,3}$ of planar.
So they are achirally embeddable.
This implies that $\Gamma_8$ does not have any non-simple minor which is intrinsically chiral.
Therefore, $\Gamma_8$ is a minor minimal intrinsically chiral graph by Proposition~\ref{prop:a8}.
\end{proof}

\bibliography{ic} 

\begin{thebibliography}{10}

\bibitem{BM}
J.~Barsotti and T.~W. Mattman.
\newblock Graphs on 21 edges that are not 2-apex.
\newblock {\em Involve, a Journal of Mathematics}, 9(4):591--621, 2016.

\bibitem{CG}
J.~H. Conway and C.~M. Gordon.
\newblock Knots and links in spatial graphs.
\newblock {\em Journal of Graph Theory}, 7:445--453, 1983.

\bibitem{Fl1}
E.~Flapan.
\newblock Symmetries of {M{\"o}bius} ladders.
\newblock {\em Mathematische Annalen}, 283(2):271--283, 1989.

\bibitem{FF}
E.~Flapan and W.~Fletcher.
\newblock Intrinsic chirality of multipartite graphs.
\newblock {\em Journal of Mathematical Chemistry}, 51(7):1853--1863, 2013.

\bibitem{FW}
E.~Flapan and N.~Weaver.
\newblock Intrinsic chirality of complete graphs.
\newblock {\em Proceedings of the American Mathematical Society},
  115(1):233--236, 1992.

\bibitem{Fo1}
J.~Foisy.
\newblock Intrinsically knotted graphs.
\newblock {\em Journal of Graph Theory}, 39(3):178--187, 2002.

\bibitem{Fo2}
J.~Foisy.
\newblock A newly recognized intrinsically knotted graph.
\newblock {\em Journal of Graph Theory}, 43(3):199--209, 2003.

\bibitem{GMN}
N.~Goldberg, T.~Mattman, and R.~Naimi.
\newblock Many, many more intrinsically knotted graphs.
\newblock {\em Algebraic \& Geometric Topology}, 14(3):1801--1823, 09 2011.

\bibitem{KMO}
H.~Kim, T.~Mattman, and S.~Oh.
\newblock Bipartite intrinsically knotted graphs with 22 edges.
\newblock {\em Journal of Graph Theory}, 85(2):568--584, 2017.

\bibitem{LKLO}
M.~Lee, H.~Kim, H.~J. Lee, and S.~Oh.
\newblock Exactly fourteen intrinsically knotted graphs have 21 edges.
\newblock {\em Algebraic \& Geometric Topology}, 15(6):3305--3322, 2016.

\bibitem{RS}
N.~Robertson and P.~Seymour.
\newblock {Graph} {Minors}. {XX}. {Wagner's} conjecture.
\newblock {\em Journal of Combinatorial Theory, Series B}, 92(2):325--357, Nov
  2004.

\bibitem{RST}
N.~Robertson, P.~Seymour, and R.~Thomas.
\newblock {Sachs′} {Linkless} {Embedding} {Conjecture}.
\newblock {\em Journal of Combinatorial Theory, Series B}, 64(2):185--227,
  1995.

\bibitem{S}
J.~Simon.
\newblock Topological chirality of certain molecules.
\newblock {\em Topology}, 25(2):229--235, 1986.

\bibitem{W}
K.~Wagner.
\newblock {{\"U}ber} eine {Eigenschaft} der ebenen {Komplexe}.
\newblock {\em Mathematische Annalen}, 114(1):570--590, 1937.

\end{thebibliography}
\bibliographystyle{abbrv}

\end{document}